\documentclass[11pt,reqno]{amsart}
\usepackage[all]{xy}
\usepackage{amssymb}
\usepackage{amsthm}
\usepackage[normalem]{ulem}
\usepackage{amsmath,mathtools}
\usepackage{amscd,enumitem}
\usepackage{verbatim}
\usepackage{eurosym}
\usepackage{float}
\usepackage{color}
\usepackage{dcolumn}
\usepackage[mathscr]{eucal}
\usepackage[all]{xy}
\usepackage{bbm}
\usepackage[textheight=8.5in, textwidth=6.7in]{geometry}
\newtheorem*{conj*}{Conjecture}
\newtheorem{theorem}{Theorem}[section]

\theoremstyle{definition}
\newtheorem*{remark}{Remark}
\theoremstyle{plain}

\newtheorem{corollary}[theorem]{Corollary}

\newcommand{\Z}{\mathbb{Z}}

\newcommand{\N}{\mathbb{N}}

\newcommand{\SL}{\operatorname{SL}}
\newcommand{\C}{\mathbb{C}}
\newcommand{\im}[1]{\text{Im}\(#1\)}

\renewcommand{\pmod}[1]{\,\,({\rm mod}\,\,{#1})}

\DeclareMathOperator{\Arg}{Arg}
\numberwithin{equation}{section}

\newtheoremstyle{example}
  {\topsep}   
  {\topsep}   
  {\normalfont}  
  {0pt}       
  {\bfseries} 
  {.}         
  {5pt plus 1pt minus 1pt} 
  {}          
\theoremstyle{example}

\def\({\left(}
\def\){\right)}

\usepackage{centernot}

\begin{document}
\title{Exact formula for cubic partitions}
\author{Lukas Mauth}
\address{Department of Mathematics and Computer Science\\Division of Mathematics\\University of Cologne\\ Weyertal 86-90 \\ 50931 Cologne \\Germany}
\email{lmauth@uni-koeln.de}

\keywords{Circle Method, $\eta$-function, partitions}

\begin{abstract} 
	We obtain an exact formula for the cubic partition function and prove a conjecture by Banerjee, Paule, Radu and Zeng.
\end{abstract}

\maketitle
\section{Introduction and statement of results}

Let $n$ be a non-negative integer. A non-increasing finite sequence of positive integers that sums to $n$ is called a {\it partition} of $n.$ We denote by $p(n)$ the number of partitions of $n,$ which can be defined by the coefficients of the following $q$-series \cite{Andrews}

\begin{equation}\label{PartitonGeneratingFunction}
\sum_{n=0}^{\infty} p(n)q^n = \prod_{n=1}^{\infty} \frac{1}{1-q^n}
\end{equation}

\noindent
 The function $p(n)$ is one of the most well-studied functions troughout number theory and satisfies numerous remarkable identities. One example is the following identity discovered by S. Ramanujan \cite{Andrews}, \cite{RamanujanCollectedPapers}.

\begin{equation*}
 \sum_{n=0}^{\infty} p(5n+4)q^n = 5 \prod_{n=1}^{\infty} \frac{(1-q^{5n})^5}{(1-q^n)^6}.
\end{equation*}

\noindent
An immediate consequence is one of Ramanujan's celebrated congruences which asserts that for any non-negative integer

\begin{equation*}
p(5n+4) \equiv 0 \pmod 5
\end{equation*}

Similar to \eqref{PartitonGeneratingFunction} we define for non-negative integers the cubic partition function $a(n)$ by

\begin{equation}\label{CubicPartitionGeneratingFunction}
	\sum_{n=0}^{\infty} a(n)q^n = \prod_{n=1}^{\infty} \frac{1}{(1-q^n)(1-q^{2n})}.
\end{equation}

\noindent
H. Chan proved that the function $a(n)$ satisfies a identity similar to the one by Ramanujan given above

$$\sum_{n=0}^{\infty} a(3n+2)q^n = 3 \prod_{n=1}^{\infty} \frac{(1-q^3)^3(1-q^6)^3}{(1-q)^4(1-q^2)^4},$$

$$a(3n +2) \equiv 0 \pmod 3,$$

\noindent
see \cite{Chan}. His result is closely related to Ramanujan's continued cubic fraction, see \cite{ChenLin}. For an introduction to Ramanujan's cubic continued fraction, see \cite{AndrewsBerndt},\cite{Berndt}. Furthermore, from the generating function \eqref{CubicPartitionGeneratingFunction} it is obvious that $a(n)$ is the number of partition pairs $(\lambda, \mu)$ such that $|\lambda| + |\mu| = n$ and $\mu$ consists of only even numbers. These connections gave rise to the term "cubic partitions".

The first $30$ values of $a(n)$ are: $1, 1, 3, 4, 9, 12, 23, 31, 54, 73, 118, 159, 246, 329, 489, 651, 940, 1242, 1751,$ 
$2298, 3177, 4142, 5630, 7293, 9776, 12584, 16659, 21320, 27922, 35532, 46092.$

There is another combinatorial interpretation of the cubic partition function. Suppose you have a real polynomial equation of degree $n.$  Then, $a(n)$ counts the number of possibilities for the roots to be real and unequal, real and equal (in various combinations), or simple or multiple complex conjugates. For example, for a real cubic polynomial, one can have $3$ real distinct roots, $3$ real roots and two are equal, $3$ real roots and $1$ real root with a complex conjugate pair of $2$ roots. Hence, $a(3) = 4$.

Another natural question about $p(n)$ is how fast it grows. In the beginnig of the $20$th century G.H. Hardy and Ramanujan showed the following asymptotics with their celebrated Circle method \cite{HardyRamanujan}

\begin{equation} \label{PartitionAsymptotics}
	p(n) \sim \frac{1}{4n\sqrt{3}}\exp\bigg(\pi \sqrt{\frac{2n}{3}}\bigg), \quad n \rightarrow \infty.
\end{equation}

\noindent
Hardy and Ramanujan made extensive use of the fact that the generating function \eqref{PartitonGeneratingFunction} is almost a modular form. More precisely, 

\begin{equation}
\sum_{n=0}^{\infty} p(n)q^n = \prod_{n=1}^{\infty} \frac{1}{1-q^n} = \frac{q^{\frac{1}{24}}}{\eta(\tau)}
\end{equation},

\noindent
where $\eta(z)$ is the Dedekind eta function defined on the upper half-plane $\mathbb{H} = \{\tau \in \C \mid \im \tau > 0\}$ by 

$$\eta(\tau) = q^{\frac{1}{24}} \prod_{n=1}^{\infty} (1-q^n), \quad q=\exp(2\pi i \tau).$$ 

\noindent
The function $\eta(\tau)$ is a modular form of weight $\frac{1}{2}$ and satisfies the transformation law, for $\gamma \in \SL_2(\Z),$

\begin{equation*}
\eta\bigg(\frac{a\tau +b}{c\tau +d}\bigg) = \varepsilon(a,b,c,d) \sqrt{-i(c\tau + d)}\eta(\tau), \quad \gamma = \begin{pmatrix}
a &b \\
c & d
\end{pmatrix}, 
\end{equation*}

\begin{equation}\label{DedekindEtaFunction}
\varepsilon(a,b,c,d) := \begin{cases}
e{\frac{\pi i b}{12}} & \text{if } c=0, d=1, \\
e^{\pi i \big(\frac{a+d}{12}-s(d,c)\big)} & \text{if } c>0.
\end{cases}
\end{equation}

\noindent
Here $s(h,k)$ is the Dedekind sum 

\begin{equation*}
s(h,k) := \sum_{n=1}^{k-1} \frac{n}{k} \bigg(\frac{hn}{k}- \bigg\lfloor\frac{hn}{k}\bigg\rfloor - \frac{1}{2} \bigg).
\end{equation*}

\noindent
A proof can be found in \cite{Apostol}. A couple years later Rademacher \cite{RademacherExact} perfected the circle method and obtained the exact formula

\begin{equation}\label{Rademacher}
p(n) = \frac{1}{\pi\sqrt{2}} \sum_{k=1}^{\infty} A_k(n) \sqrt{k}\cdot\frac{d}{dn}\left(\frac{\sinh\bigg[\frac{\pi}{k}\sqrt{\frac{2}{3}\left(n-\frac{1}{24}\right)}\bigg]}{\sqrt{n-\frac{1}{24}}}\right),
\end{equation}

\noindent
where $A_k(n)$ is the Kloosterman sum given by

\begin{equation*}
A_k(n) := \sum_{\substack{0\leq m < k \\ \gcd(m,k) = 1}} \exp\left(\pi i \left(s(m,k)-\frac{2nm}{k}\right)\right).
\end{equation*}

\noindent
Note that the series \eqref{Rademacher} converges really fast and that the first term recovers the result by Hardy and Ramanujan \eqref{PartitionAsymptotics}. Here we want to find an exact formula of Rademacher type for the cubic partition function $a(n),$ by using a result of Zuckerman \cite{Zuckerman}, extending Rademacher's work and it can be seen as the apotheosis of the classical Circle Method. Using the Circle Method, Zuckerman computed exact formulas for Fourier coefficients of weakly holomorphic modular forms of arbitrary non-positive weight on subgroups of $\SL_2(\Z)$ with finite index in terms of the cusps of the underlying subgroup and the negative coefficients of the form at each cusp. We state the relevant results of Zuckerman in Section $2$ and prove the exact formula for $a(n)$ in Section $3.$ Similar to the case above we will obtain as an immediate consequence the asymptotics

$$a(n) \sim \frac{e^{\pi\sqrt{n-\frac{1}{8}}}}{8\left(n-\frac{1}{8}\right)^{\frac{5}{4}}}, n \rightarrow \infty.$$

Finally, we are going to prove a conjecture by Banerjee, Paule, Radu and Zeng \cite{BanerjeePauleRaduZeng} which predicts the following asymptotic formula for $\log a(n)$

\begin{equation*}
\log \left(a(n)\right) \sim \pi \sqrt{n} - \frac{5}{4} \log \left(n\right) - \log \left(8\right) - \left(\frac{15}{8\pi} + \frac{\pi}{16}\right)\frac{1}{\sqrt{n}}, \quad n \rightarrow \infty, \quad \frac{15}{8\pi} + \frac{\pi}{16} \approx 0.79
\end{equation*}

\section*{Acknowledgements}
The author wishes to thank Kathrin Bringmann and Walter Bridges for suggesting this problem, William Craig and Andreas Mono for sharing their knowledge on modular forms and helpful suggestions, and Johann Franke for helping me verify the results numerically. The author recieved funding from the European Research Council (ERC) under the European Union’s Horizon 2020 research and innovation programme (grant agreement No. 101001179).

\section{Zuckerman's result}
The following two sections follow Zuckerman's work \cite{Zuckerman} closely. Most of the results are completely taken over and translated into more modern language. His method, due to its generality involves numerous technical parameters which appear in the Cirlce Method and do not have obvious meaning without knowing the context they arise in. The interested reader should thus consult \cite{Zuckerman} for a more detailed account.

Let $\Gamma$ be a subgroup of $\SL_2(\Z)$ of finite index and let $F$ be a weakly holomorphic modular form of weight $k=-r, r>0.$  Thus, $F(\tau)$ satisfies a transformation equation of the form

\begin{equation*}
F\left(\frac{a\tau + b}{c\tau + d}\right) = \varepsilon(-i(c\tau + d))^kF(\tau), \quad \begin{pmatrix}
a &b \\
c & d
\end{pmatrix} \in \Gamma,
\end{equation*}

\noindent
where $\varepsilon = \varepsilon(a,b,c,d)$ lies on the unit circle and depends only on the transformation. If $c\neq 0,$ then $c$ is taken to be positive and we choose the branch of the argument such that 

$$ -\frac{\pi}{2} < \Arg(-i(c\tau + d)) < \frac{\pi}{2}.$$

\noindent
Since $\Gamma$ is of finite index in the modular group we can choose a complete finite system of inequivalent cusps of $\Gamma,$ which we will denote by $P_1, \dots P_s,$
where 

$$P_g = \frac{p_q}{q_g}, \quad \gcd(p_q,g_q) = 1, \quad g_q > 0, \quad  g=1, \cdots , s.$$

\noindent
In order to treat all our cusps symetrically, we will have to assume that the point at infinity does not belong to our chosen set of inequivalent cusps. This can always be achieved by considering a $\Gamma$-equivlant rational point instead. We will obtain a set of Fourier expansions $f_1, \cdots, f_s$ corresponding to our set of cusps in the variable $(\tau - P_g)^{-1}.$ Consider now any transformation $\gamma \in \SL_2(\Z),$ which must not necessarily belong to our subgroup $\Gamma.$ We then write

\begin{equation}\label{GeneralFourierExpansion}
F\left(\frac{a\tau + b}{c\tau + d}\right) = \varepsilon^{*}(-i(c\tau + d))^kF^{*}(\tau), \quad \gamma = \begin{pmatrix}
a &b \\
c & d
\end{pmatrix} \in \SL_2(\Z),
\end{equation}

\noindent
where $F^{*}(\tau)$ is now a usual Fourier expansion in $\tau,$ obtained from one of the expansions 

$$f_g(x) = \sum_{m =-\mu_g}^{\infty} a_m^{(g)} x^m \quad x = \exp\left(\frac{-2\pi i}{c_g(\tau - P_g)}\right),$$

\noindent
where $c_g>0$ will be specified below. Note that by choosing the identity matrix in \eqref{GeneralFourierExpansion} we get a usual Fourier expansion $F(\tau),$ which we will later want. Corresponding to each of the cusps $P_g$ we can find a transformation in $\Gamma$ 

\begin{equation*}
\tau' = \frac{a\tau + b}{c\tau + d},
\end{equation*}

\noindent
which can be written in the the form 

$$\frac{1}{\tau'-P_g} = \frac{1}{\tau - P_g} + c_g, \quad c_g >0.$$

\noindent
We can recover the transformation in the usual form as

\begin{equation*}
	\tau' = \frac{(c_gP_g +1) - c_gP_g^2}{c_g\tau + 1 - c_gP_g}.
\end{equation*}

\noindent
We then define $\alpha_g$ by the equation $\varepsilon e^{\frac{\pi i}{2}} = e^{-2\pi i \alpha_g},$ where $0\leq \alpha_g < 1$ and $\varepsilon$ is taken with respect to $\tau'.$

We now describe how to find $F^{*}(\tau)$ in terms of the Fourier expansions $f_g.$ We therefore choose any transformation $\gamma \in \SL_2(\Z).$ Then, $\frac{a}{c}$ is a rational point if $c\neq 0$ or the point at infinity if $c=0.$ In any case $\frac{a}{c}$ is $\Gamma$-equivalent to exactly one of our cusps $P_g.$ We can therefore find a transformation 

$$\frac{a_1P_g + b_1}{c_1P_g + d_1} = \frac{a}{c}, \quad c\geq 0, \quad \begin{pmatrix}
a_1 &b_1 \\
c_1 & d_1
\end{pmatrix} \in \Gamma,$$

\noindent
which takes $P_g$ into $\frac{a}{c}.$ We can then express $F^{*}(\tau)$ in terms of the Fourier expansion $f_g$ as follows

\begin{equation*}
F^{*}(\tau) = q_g^{k} \exp\left(\frac{2\pi i q_g^2}{c_g} \alpha_g \tau \right) \dot f_g\left(\exp\left(\kappa\frac{2\pi i q_g}{c_g} (a_1d-c_1b)\right) \exp\left(\frac{2 \pi i q_g^2}{c_g} \tau \right)\right),
\end{equation*}

\noindent
where $\kappa = \pm 1$ is taken as the solution to the following equations

$$p_q = \kappa(ad_1-cb_1), \quad q_g = \kappa(ca_1-ac_1).$$

\noindent
Furthermore, one obtains

$$\varepsilon*=\varepsilon(a_1,b_1,c_1,d_1) \exp\left(\kappa \frac{\pi i r}{2}\right) \exp\left(\kappa\frac{2\pi i q_g}{c_g} \alpha_g(a_1d-c_1b)\right).$$ This gives the Fourier expansion in spirit of \eqref{GeneralFourierExpansion} at any rational point $\frac{a}{c},$ including the point at infinity. When executing the circle method with these transformation equations there are numerous parameters that appear. First of all consider as usual the Farey fractions $\frac{h}{k}, \gcd(h,k) = 1, k>0, h\geq 0.$ Then, consider the point $P=P_g - \frac{k}{hc_g}$ which is either a rational point or the point at infinity. In any case it is $\Gamma$-equivalent to one of our cusps $P_\beta,$ where $\beta = \beta(h,k,g).$ There exists a transformation in $\Gamma$ which takes this point into $P_\beta$

\begin{equation*}
\frac{aP+b}{cP+d} = P_\beta, \quad \begin{pmatrix}
a &b \\
c & d
\end{pmatrix} \in \Gamma.
\end{equation*}

\noindent
Associated to this transformation is the parameter $\sigma_{h,k}^{(g)}$ implicitly defined as the solution of the equations

$$a\left(\frac{c_g}{q_g}p_gh-k\right) + bc_gh = \sigma_{h,k}^{(g)}p_\beta,$$

$$c\left(\frac{c_g}{q_g}p_gh-k\right) + dc_gh = \sigma_{h,k}^{(g)}q_\beta.$$

\noindent
It satisfies $-c_g \leq \sigma_{h,k}^{(g)} \leq c_g, \sigma_{h,k}^{(g)} \neq 0.$ The remaining technical parameters necessary to state Zuckerman's result are 

$$\delta_{h,k}^{(g)} = \begin{cases}
-1 & \text{if } \sigma_{h,k}^{(g)} > 0, \\
1 & \text{if } \sigma_{h,k}^{(g)} < 0,
\end{cases} \quad G_{h,k}^{(g)} = - \frac{2\pi}{kc_\beta}\sigma_{h,k}^{(g)}q_\beta(cP_g+d),$$
$$\Omega_{h,k} = \varepsilon^{-1}(a,b,c,d) \exp\left((1-\delta_{h,k})\frac{\pi i r}{2}\right) \exp\left(-\frac{2\pi i}{k} \left(\alpha_g h + \frac{\sigma_{h,k}^{(g)}q_\beta(cP_g+d)\alpha_\beta}{c\beta}\right)\right).$$

Finally, Zuckerman's result reads now as follows.

\begin{theorem}[Zuckerman's exact formula]\label{ZuckermanTheorem}
	Assume the notation and hypotheses above and denote by $I_r$ the Bessel function of order $r.$ If $n + \alpha_g > 0,$ then we have 
	
	\begin{multline*}
	a_n^{(g)} = \frac{2\pi c_g^{\frac{r-1}{2}}}{(\alpha_g + n)^{\frac{r+1}{2}}} \sum_{k=1}^{\infty}\sum_{\substack{0\leq h < k \\ (h,k) = 1}} \Omega_{h,k}^{(g)}\exp\bigg(-2\pi i n \frac{h}{k}\bigg)\\
	 \times \sum_{\nu = 1}^{\mu_\beta}  a_{-\nu}^{\beta} \exp\left(-\nu G_{h,k}^{(g)} i \right) \frac{\left|\sigma_{h,k}^{(g)}\right|q_\beta(\nu-\alpha_\beta)^{\frac{r+1}{2}}}{kc_\beta^{\frac{r+1}{2}}} \dot I_{r+1}\left(\frac{4\pi |\sigma_{h,k}^{(g)}|q_\beta\sqrt{(\nu-\alpha_\beta)(\alpha_g + n)}}{k\sqrt{c_gc_\beta}}\right).
	\end{multline*}
\end{theorem}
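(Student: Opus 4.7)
The plan is to prove the formula by the classical Rademacher Circle Method, generalized from a single cusp to an arbitrary complete system of cusps $P_1,\dots,P_s$ as in Zuckerman. One starts from Cauchy's integral formula for the coefficient $a_n^{(g)}$ in the local expansion $f_g$ at $P_g$. After passing to the expansion at $i\infty$ via \eqref{GeneralFourierExpansion}, this takes the shape
\begin{equation*}
a_n^{(g)} \;=\; \int F^{*}(\tau)\exp\!\big(-2\pi i(n+\alpha_g)\tau\big)\,d\tau,
\end{equation*}
where the integration is along a horizontal segment of small imaginary height (equivalently, along a Ford-circle contour). This segment is Farey-dissected into arcs indexed by $h/k$ with $0\le h<k\le N$, $\gcd(h,k)=1$.

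On the arc attached to $h/k$, the crucial observation of Zuckerman is that the rational point $P_g-k/(hc_g)$ is $\Gamma$-equivalent to exactly one of the chosen cusps $P_\beta=P_{\beta(h,k,g)}$. The matrix in $\Gamma$ realising this equivalence is used, via the transformation law \eqref{GeneralFourierExpansion}, to rewrite the integrand on this arc in terms of the local expansion $f_\beta$ at $P_\beta$. The inverse automorphy factor $\varepsilon^{-1}(a,b,c,d)$, the Farey-fraction phase $\exp(-2\pi i n h/k)$, and an $\alpha_\beta$-dependent phase assemble into the multiplier $\Omega_{h,k}^{(g)}$; the phase $\exp(-\nu G_{h,k}^{(g)} i)$ appears from evaluating the exponential $\exp(2\pi i q_\beta^2\alpha_\beta\tau'/c_\beta)$ at the centre of the arc.

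One then splits $f_\beta(x)=\sum_{\nu=1}^{\mu_\beta}a_{-\nu}^{(\beta)}x^{-\nu}+\sum_{\nu\ge 0}a_\nu^{(\beta)}x^{\nu}$. The regular (nonprincipal) part contributes only to the error. The principal part, after each Farey arc is approximated by a full Hankel contour, is evaluated by the classical Hankel-integral representation of the modified Bessel function: the integrand has the shape $z^{-r-2}\exp(Az+B/z)$ with $A$ proportional to $(n+\alpha_g)$ and $B$ proportional to $(\sigma_{h,k}^{(g)}q_\beta)^{2}(\nu-\alpha_\beta)/(k^{2}c_g c_\beta)$, and the integral is a constant multiple of $I_{r+1}(2\sqrt{AB})$. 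Tracking the normalising constants coming from the automorphy factor (with weight $-r$), the rescalings by $c_g$ and $c_\beta$, and the cusp denominator $q_\beta$ reproduces exactly the prefactors in the statement, including the precise argument of $I_{r+1}$.

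The main obstacle, as always in an exact Circle Method computation, is the uniform error analysis: contributions from the regular part of $f_\beta$ on each arc, together with the truncation error incurred by replacing each Farey arc by the full Hankel contour, must sum to $o(1)$ as $N\to\infty$. The parameters $\sigma_{h,k}^{(g)}$ and $\delta_{h,k}^{(g)}$ are introduced precisely for this bookkeeping: the bound $-c_g\le\sigma_{h,k}^{(g)}\le c_g$ controls the geometry of the transformed arc, while the sign of $\sigma_{h,k}^{(g)}$ selects the correct orientation of the Hankel contour, itself encoded in the phase $\exp((1-\delta_{h,k})\pi ir/2)$ absorbed into $\Omega_{h,k}^{(g)}$. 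The convergence of the resulting double series in $k$ and $\nu$ relies in an essential way on the negative weight $-r<0$, together with uniform bounds on the nonprincipal part of each $f_\beta$; carrying this out in full generality for an arbitrary finite-index subgroup and an arbitrary complete system of cusps is the technical heart of \cite{Zuckerman}.
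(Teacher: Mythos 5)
The paper does not prove this theorem at all: it is quoted (modulo notation) from \cite{Zuckerman}, so there is no internal proof to compare your attempt against. Your outline does correctly reproduce the architecture of Zuckerman's argument, which is Rademacher's convergent form of the circle method run simultaneously over a complete system of cusps: Cauchy's formula for $a_n^{(g)}$, Farey dissection, transport of each arc to the distinguished cusp $P_{\beta(h,k,g)}$ via the transformation law \eqref{GeneralFourierExpansion}, separation of the principal and regular parts of $f_\beta$, and evaluation of the principal-part integrals by the Hankel--Schl\"afli representation $I_\nu(z)=\frac{(z/2)^\nu}{2\pi i}\int t^{-\nu-1}e^{t+z^2/(4t)}\,dt$, which is indeed where the order $r+1$ and the argument $2\sqrt{AB}$ come from.

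As a proof, however, the proposal has a genuine gap, and you name it yourself: the entire quantitative content is deferred. Nothing in the write-up establishes (i) that the contribution of the regular part of $f_\beta$ on each Farey arc, summed over all $h/k$ with $k\le N$, tends to $0$ as $N\to\infty$ --- this requires uniform bounds near the cusp depending on the geometry of the transformed arc, which is precisely what the inequality $-c_g\le\sigma_{h,k}^{(g)}\le c_g$ is for; (ii) that replacing each arc by the full loop contour costs only $o(1)$; or (iii) that the resulting double series over $k$ and $\nu$ converges absolutely, which is what licenses the final rearrangement and is also what the paper itself invokes at the start of Section 3. Likewise, the identification of the accumulated automorphy factors with the stated $\Omega_{h,k}^{(g)}$, and of the evaluated exponential with $\exp(-\nu G_{h,k}^{(g)}i)$, is asserted (``assemble into'') rather than computed; since the entire point of the theorem is the exactness of these constants, that computation cannot be omitted. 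In short: the roadmap is the right one, but the destination is reached only by citation to \cite{Zuckerman} --- which is, in effect, what the paper does as well.
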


\section{Exact formula for the cubic partition function}

We recall that the cubic partition function was defined by the identity

\begin{equation*}
\sum_{n=0}^{\infty} a(n)q^n = \prod_{n=1}^{\infty} \frac{1}{(1-q^n)(1-q^{2n})},
\end{equation*}

\noindent
which is up to a power of $q$ equal to $(\eta(\tau)\eta(2\tau))^{-1}.$ Using the relation 

$$ 2\frac{a\tau + b}{c\tau + d} = \frac{a(2\tau) + 2b}{\frac{c}{2}(2\tau) + d}$$

\noindent
we can view $\eta(2\tau)$ in light of \eqref{DedekindEtaFunction} as a modular form of weight $\frac{1}{2}$ on the subgroup $\Gamma_0(2)$ of $\SL_2(\Z)$ with multiplier

$$\varepsilon = \varepsilon\left(a,2b,\frac{c}{2},d\right),$$

\noindent
Hence, we find that 

$$F(\tau) = \frac{1}{\eta(\tau)\eta(2\tau)}$$

\noindent
is a weakly modular form of weight $-1$ with multiplier and thus we can apply Zuckerman's result to find the Fourier coefficients of $F(\tau)$ and therefore a formula of $a(n).$ The group $\Gamma_0(2)$ has two inequivalent cusps $0$ and $\frac{1}{2}.$ We consider the transformations

$$ \tau' = \frac{\tau}{2\tau + 1}, \quad \tau' = \frac{3\tau - 1}{4\tau -1}$$

\noindent
which we write as

$$\frac{1}{\tau'} = \frac{1}{\tau} + 2, \quad \frac{1}{\tau'-\frac{1}{2}} = \frac{1}{\tau - \frac{1}{2}} + 4.$$

Thus, we find that $c_1 = 2$ and $c_2 = 4.$ Moreover we can now determine $\alpha_1$ and $\alpha_2.$ Since $F(\tau)$ has the multiplier

$$\varepsilon = \varepsilon(a,b,c,d)^{-1}\varepsilon\left(a,2b,\frac{c}{2},d\right)^{-1},$$

\noindent
a short calculation shows that $\alpha_1 = \alpha_2 = \frac{7}{8}.$ Since the point at infinity is $\Gamma_0(2)$-equivalent to $\frac{1}{2},$ we obtain that $F^{*}(\tau))$ is given in terms of the Fourier expansion $f_2.$ To find the exact expression for $F^{*}(\tau))$ we follow Zuckerman's method by choosing a transformation that maps $\frac{1}{2}$ to the point at infinity. Obvioulsy

$$ \begin{pmatrix}
-1 &0 \\
2 & -1
\end{pmatrix} \frac{1}{2} = i\infty.$$

\noindent
Hence, $\kappa = -1$ and 

\begin{equation*}
F^{*}(\tau)=\frac{1}{2}\exp\left(\frac{7\pi i}{8} \tau\right) f_1(-\exp(2\pi i \tau)) = \frac{1}{2}\exp\left(\frac{14\pi i}{8}\right) \sum_{m =-\mu_1}^{\infty} (-1)^m a_m^{(1)}.
\end{equation*}

\noindent
Furthermore, we consider the point $\frac{a}{c} = \frac{0}{1},$ which corresponds to the transformation  $\tau \mapsto -\frac{1}{\tau}.$ In this case the obvious equivalent point is $0$ itself and the identity is the corresponding transformation. Thus, in this case

$$F^{*}(\tau) = \exp\left(\frac{7\pi i}{8} \tau\right) f_2(\exp(\pi i \tau)).$$

We now determine $a_{-1}^{(g)}$ and $\mu_{-2}^{(g)}.$ In the first case we have on the one hand

$$F\left(\begin{pmatrix}
1 &0 \\
0 & 1
\end{pmatrix}\tau\right) = F(\tau) = \varepsilon^{*}(-1,0,2,-1) \frac{1}{2}\exp\left(\frac{14\pi i}{8}\tau\right) \sum_{m =-\mu_1}^{\infty} (-1)^m a_m^{(1)}$$

\noindent
with $\varepsilon^{*} = e^{\frac{7\pi i}{8}}.$ On the other hand we have by definition that 

\begin{equation}\label{FourierExpansion}
F(\tau) = e^{-\frac{\pi i \tau }{4}}\left( a(0) + a(1)e^{2\pi i \tau} + \dots \right) = e^{\frac{2\pi i 7}{8} \tau} \left( a(0)e^{-2\pi i \tau} + a(1) + \dots \right)
\end{equation}

\noindent
Comparing coefficients we see that $\mu_{-1}^{(2)} = 1 $ and $a_{-1}^{(2)} = -2e^{\frac{\pi i}{8}}.$ Furthermore, \eqref{FourierExpansion} implies that the Fourier coefficients are exactly $a(n)$ shifted down by one. Thus, we can recover $a(n)$ through the relation $n=m+1.$ A similar calculation shows that $\mu_{-1}^{(1)} = 1$ and $a_{-1}^{(2)} = \frac{e^{-\frac{\pi i}{2}}}{\sqrt{2}}.$

We now continue to determine the remaining parameters only for $g=2$ as the Fourier expansion at $\frac{1}{2}$ gives us the exact formula. For $\beta(h,k,2)$ it is required to find transformations that send $P_g-\frac{k}{c_gh}$ into $P_\beta.$ Let $h', h'', h'''$ be any solutions of 

$$ \left(\frac{k}{2} -h \right)h' \equiv 1 \pmod{2h}, \quad h' > 0, \quad \text{ for } k \equiv 0 \pmod{4},$$ 
$$ kh'' \equiv -1 \pmod{h}, \quad h'' > 0, \quad \text{ for } k \equiv 0 \pmod{2},$$ 
$$ (k-2h)h''' \equiv 1 \pmod{4h}, \quad h''' > 0, \quad \text{ for } k \equiv 1 \pmod{2}.$$ 

\noindent
Then, the following table gives the remaining parameters for needed to apply Theorem \ref{ZuckermanTheorem}

\begin{center}
\begin{tabular}{|cc|c|cc|cc|cc|c|c|c|cc|}
\hline \rule[-3mm]{0mm}{8mm}
$k$  && $a$   && $b$  && $c$ && $d$ & $\beta$ & $\sigma_{h,k}^{(2)}$ & $\delta_{h,k}^{(2)}$ && $G_{h,k}^{(2)}$   \\   \hline 

$ \equiv 1 \pmod{2}$ && $h'''$ && $\frac{(k-2h)h'''-1}{4h}$ && $2h''' + 4h$ && $\frac{(k-2h)h'''-1}{2h} - 2h + k$ & $2$ & $-1$ & $1$ && $\left(\frac{h'''k-1}{2kh} + 1\right)\pi$ \\

$ \equiv 0 \pmod{4}$ && $h'$ && $\frac{\left(\frac{k}{2}-h\right)h'- 1}{2h}$ && $2h' + 2h$ && $\frac{\left(\frac{k}{2}-h\right)h' - 1}{h} + \frac{k}{2} - h $ & $2$ & $-2$ & $1$ && $\left(\frac{h'k-2}{kh} + 1\right)\pi$ \\

$ \equiv 2 \pmod{4}$ && $h$ && $\frac{k-2h}{4}$ && $4h''$ && $\frac{kh'' + 1}{h} - 2h''$ & $1$ & $4$ & $-1$ && $-\left(\frac{4h''k+4}{kh}\right)\pi$ \\

\hline
\end{tabular}
\end{center}

Since the series in Theorem \ref{ZuckermanTheorem} converges absolutely, the terms can be rearranged and putting everything together, keeping in mind the shift $n = m+1,$ it follows that

\begin{align}
a(n) = &\frac{e^{-\frac{\pi i}{2}}}{2\sqrt{2}} \sum_{\substack{k \equiv 2 \pmod{4} \\ k > 0}} A_{k}(n+1) \frac{1}{n-\frac{1}{8}} I_2\left(\frac{2\pi\sqrt{n-\frac{1}{8}}}{k}\right) \nonumber \\
&- \frac{\pi e^{\frac{\pi i}{8}}}{2} \sum_{\substack{k \equiv 0 \pmod{4} \\ k > 0 }} \frac{1}{k} A_k(n+1) \frac{1}{n-\frac{1}{8}} I_2\left(\frac{\sqrt{2}\pi\sqrt{n-\frac{1}{8}}}{k}\right) \nonumber \\
&- \frac{\pi e^{\frac{\pi i}{8}}}{4} \sum_{\substack{k \equiv 1 \pmod{2} \\ k > 0 }} \frac{1}{k} A_k(n+1) \frac{1}{n-\frac{1}{8}} I_2\left(\frac{\sqrt{2}\pi\sqrt{n-\frac{1}{8}}}{2k}\right),
\end{align}

where 

\begin{multline}
	A_k(n) = \sum_{\substack{0\leq h < k \\ (h,k) = 1}} \varepsilon\left(h, \frac{k-2h}{4}, 4h'', \frac{kh''+1}{h} - 2h'' \right)^{-1} \exp(\pi i) \\
	\times \exp\left(-\frac{2\pi i 7}{8k}h + \frac{1}{8}\left(\frac{2h''k+2}{kh}\right)\pi i - 2\pi in\frac{h}{k}\right), \quad k \equiv 2 \pmod{4},
\end{multline}

\begin{multline}
A_k(n) = \sum_{\substack{0\leq h < k \\ (h,k) = 1}} \varepsilon\left(h', \frac{\left(\frac{k}{2}-h\right)h' - 1}{2h}, 2h'+2h', \frac{\left(\frac{k}{2} - h\right)h' - 1}{h} + \frac{k}{2} - h \right)^{-1} \exp(\pi i) \\
\times \exp\left(-\frac{2\pi i 7}{8k}h - \frac{1}{8}\left(\frac{h'k - 2}{kh} + 1\right)\pi i - 2\pi in\frac{h}{k}\right), \quad k \equiv 0 \pmod{4},
\end{multline}

\begin{multline}
A_k(n) = \sum_{\substack{0\leq h < k \\ (h,k) = 1}} \varepsilon\left(h''', \frac{(k-2h)h''' - 1}{4h}, 2h''' + 4h, \frac{(k-2h)h'''-1}{2h} - 2h + k \right)^{-1} \exp(\pi i) \\
\times \exp\left(-\frac{2\pi i 7}{8k}h - \frac{1}{8}\left(\frac{h'''k - 1}{2kh} + 1\right)\pi i - 2\pi in\frac{h}{k}\right), \quad k \equiv 2 \pmod{4},
\end{multline}

Changing the order of summation one last time we arrive at the following

\begin{theorem}
	For $n \geq 0$ we have the exact formula
	
	\begin{align}\label{ExactFormula}
	& a(n) = \frac{\pi e^{-\frac{\pi i}{2}}}{4\sqrt{2}\left(n-\frac{1}{8}\right)} \sum_{\substack{ l \equiv 1 \pmod{2} \\ l > 0}} \frac{1}{l} A_{l'}(n+1) I_2\left(\frac{\pi\sqrt{n-\frac{1}{8}}}{l}\right) \nonumber \\
	& - \frac{\pi e^{\frac{\pi i}{8}}}{2\left(n-\frac{1}{8}\right)} \sum_{\substack{ l \equiv 0 \pmod{2} \\ l > 0}} \frac{1}{l} A_{l'}(n+1) I_2\left(\frac{\sqrt{2}\pi \sqrt{n-\frac{1}{8}}}{l}\right),
	\end{align}
	
	\noindent
	where $l' = l$ if $l \equiv 1 \pmod{2},$ $l' = l$ if $l \equiv 0 \pmod{4},$ $l' = \frac{l}{2}$ if $l \equiv 2 \pmod 4.$
\end{theorem}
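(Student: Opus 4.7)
The plan is to deduce the stated formula by reorganizing the three-piece expression for $a(n)$ displayed immediately above the theorem. Since the series supplied by Theorem~\ref{ZuckermanTheorem} converges absolutely, we may reindex each of the three inner sums freely.

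The guiding observation is that each of the three sums produces a Bessel factor $I_2$ whose argument, after an elementary rewriting, matches one of the two shapes $\tfrac{\pi\sqrt{n-1/8}}{l}$ or $\tfrac{\sqrt 2\,\pi\sqrt{n-1/8}}{l}$. In the sum over $k\equiv 2\pmod 4$ I would write $\tfrac{2\pi\sqrt{n-1/8}}{k}=\tfrac{\pi\sqrt{n-1/8}}{k/2}$ and substitute $l=k/2$, so that $l$ runs over the odd positive integers; this produces the first line of the theorem. In the sum over $k\equiv 0\pmod 4$ I would take $l=k$, giving $l\equiv 0\pmod 4$. In the sum over odd $k$ I would substitute $l=2k$, so that $l\equiv 2\pmod 4$ and $\tfrac{\sqrt 2\,\pi\sqrt{n-1/8}}{2k}=\tfrac{\sqrt 2\,\pi\sqrt{n-1/8}}{l}$. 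The latter two reindexed sums share a common Bessel argument, so they amalgamate into a single sum over all even $l$, yielding the second line of the theorem.

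The auxiliary symbol $l'$ simply records which of the three Kloosterman-sum formulas is attached to the $l$-th term after the substitution, namely the one associated to the original modulus $k$. Tracing the substitutions above, this gives $l'=l$ for $l$ odd (with the odd-$k$ formula), $l'=l$ for $l\equiv 0\pmod 4$ (with the $k\equiv 0\pmod 4$ formula), and $l'=l/2$ for $l\equiv 2\pmod 4$ (with the odd-$k$ formula applied to $l/2$), matching the definition in the theorem.

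The only non-mechanical step is to check that the scalar prefactors line up under the substitutions. I expect this to be the main (and essentially only) obstacle: one has to track the three overall constants $\tfrac{e^{-\pi i/2}}{2\sqrt 2}$, $\tfrac{\pi e^{\pi i/8}}{2}$, $\tfrac{\pi e^{\pi i/8}}{4}$, the common factor $1/(n-1/8)$, and the $1/k$ weights, and verify that they collect into the unified coefficients $\tfrac{\pi e^{-\pi i/2}}{4\sqrt 2\,(n-1/8)\,l}$ and $\tfrac{\pi e^{\pi i/8}}{2(n-1/8)\,l}$ written in the statement. Once this bookkeeping is completed the exact formula follows.
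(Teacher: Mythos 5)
Your reindexing is the same move the paper makes (``changing the order of summation one last time''), and for the even-$l$ half it closes completely: with $l=k$ for $k\equiv 0\pmod 4$ and $l=2k$ for $k$ odd, the weights $\frac{\pi e^{\pi i/8}}{2k}$ and $\frac{\pi e^{\pi i/8}}{4k}=\frac{\pi e^{\pi i/8}}{2l}$ do coalesce into the single coefficient $\frac{\pi e^{\pi i/8}}{2l}$ of the second line, and $A_k=A_{l/2}$ matches $l'=l/2$ for $l\equiv 2\pmod 4$. The genuine gap is exactly the step you defer as ``bookkeeping'': for the odd-$l$ line it does not close. The displayed $k\equiv 2\pmod 4$ sum carries the coefficient $\frac{e^{-\pi i/2}}{2\sqrt{2}}$ with no $1/k$ weight, so under $l=k/2$ each term acquires the factor $\frac{e^{-\pi i/2}}{2\sqrt{2}\left(n-\frac18\right)}$, whereas the theorem asserts $\frac{\pi e^{-\pi i/2}}{4\sqrt{2}\left(n-\frac18\right)}\cdot\frac{1}{l}$; these differ by $\frac{\pi}{2l}$ and no reindexing can reconcile them. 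To actually reach the stated constants one must return to Theorem \ref{ZuckermanTheorem}: there the $k\equiv 2\pmod 4$ terms carry the factor $\frac{2\pi}{n-\frac18}\cdot a^{(1)}_{-1}\cdot\frac{|\sigma_{h,k}^{(2)}|\,q_\beta\,(1-\alpha_\beta)}{k c_\beta}=\frac{2\pi}{n-\frac18}\cdot\frac{e^{-\pi i/2}}{\sqrt{2}}\cdot\frac{4\cdot 1\cdot\frac18}{2k}=\frac{\pi e^{-\pi i/2}}{2\sqrt{2}\,k\left(n-\frac18\right)}$, which with $k=2l$ is precisely the stated $\frac{\pi e^{-\pi i/2}}{4\sqrt{2}\,l\left(n-\frac18\right)}$. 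In other words, the intermediate three-sum display you take as your starting point is itself missing a factor $\frac{\pi}{2k}$ in its first sum, and a derivation that takes it at face value cannot produce the theorem; the verification you postpone is the one place the argument actually breaks.

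A second, smaller defect is your account of $l'$ on the odd line. Under your own substitution $l=k/2$ with $k\equiv 2\pmod 4$, the Kloosterman sum attached to an odd $l$ is $A_{2l}$ evaluated with the $k\equiv 2\pmod 4$ formula (the one built from $h''$), not ``$A_l$ with the odd-$k$ formula'' as you assert; the odd-$k$ (i.e.\ $h'''$) formula enters only through the $l\equiv 2\pmod 4$ terms of the second line. So your justification of the $l'$ table does not follow from the substitutions you perform, and this index bookkeeping needs to be redone carefully alongside the constants.
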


It is easy to see that the series converges rapidly and that the term $l=1$ dominates the rest of the sum. Thus, we may conclude that

$$a(n) \sim \frac{\pi}{4\sqrt{2}\left(n-\frac{1}{8}\right)} I_2\left(\pi\sqrt{n-\frac{1}{8}}\right).$$

\noindent
For the Bessel function of order $\nu \in \N$  the following asymptotics are known,

\begin{equation}
I_\nu(z) \sim \frac{e^{z}}{\sqrt{2\pi z}} \sum_{k=0}^{\infty} (-1)^k \frac{a_k(\nu)}{z^k},
\end{equation}

\noindent
see \cite{Temme}, where $a_k(\nu)$ is defined as

\begin{equation}
a_k(\nu) = (-1)^k\frac{\left(\frac{1}{2} - \nu \right)\left( \frac{1}{2} + \nu \right)}{2^k k!}, \quad k = 0,1,2,\dots
\end{equation}
\begin{equation}
(\lambda)_n = \frac{\Gamma(\lambda + n)}{\Gamma(\lambda)}
\end{equation}

\begin{equation}
(\lambda)_0 = 1
\end{equation}

\noindent
Thus, we find that the first order approximation is 

$$I_2(z) = \frac{e^{z}}{\sqrt{2\pi z}} \left( 1 + O\left(\frac{1}{z}\right)\right)$$

\noindent
We now obtain the following asymptotics.

\begin{corollary}
	We have
	\begin{equation}
	a(n) \sim \frac{e^{\pi\sqrt{n-\frac{1}{8}}}}{8\left(n-\frac{1}{8}\right)^{\frac{5}{4}}}
	\end{equation}
\end{corollary}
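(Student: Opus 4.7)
The plan is to start from the exact formula \eqref{ExactFormula} and isolate the single term that dominates as $n\to\infty$. Every summand carries a Bessel factor $I_2(C_l\sqrt{n-\tfrac{1}{8}})$ with $C_l=\pi/l$ for the odd-$l$ sum and $C_l=\sqrt{2}\pi/l$ for the even-$l$ sum. Since $I_\nu(x)\sim e^x/\sqrt{2\pi x}$, each summand grows like $\exp(C_l\sqrt{n-1/8})$, so the dominant index maximises $C_l$. Comparing $C_1=\pi$ (odd sum, $l=1$) with $C_2=\pi/\sqrt{2}$ (even sum, $l=2$) shows that the leading contribution comes from the $l=1$ term of the odd sum.

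First I would evaluate that $l=1$ summand. The inner sum $A_1(n+1)$ has only $h=0$, so the contribution reduces, after simplifying the $\varepsilon$-multiplier and the two exponential phase factors in the $k\equiv1\pmod 2$ row of the table, to the real constant $\pi/(4\sqrt{2}(n-\tfrac{1}{8}))$ times $I_2(\pi\sqrt{n-\tfrac{1}{8}})$. This is precisely the intermediate statement
\[
a(n)\sim \frac{\pi}{4\sqrt{2}(n-\tfrac{1}{8})}\,I_2\!\left(\pi\sqrt{n-\tfrac{1}{8}}\right)
\]
that the author has recorded just before the corollary.

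Next I would control the tail. The trivial bound $|A_{l'}(n+1)|\leq\varphi(l')\leq l'$, together with $I_2(z)\ll e^z/\sqrt{z}$, bounds every non-leading term by $O\!\big(l' l^{-1}(n-\tfrac{1}{8})^{-5/4} e^{\pi\sqrt{n-1/8}/\sqrt{2}}\big)$. Summation over $l\ge 2$ converges (absolute convergence of the series in Theorem~\ref{ZuckermanTheorem} has already been used to rearrange the terms), and the whole tail is exponentially smaller than the $l=1$ contribution by a factor of at least $\exp\!\big((1-\tfrac{1}{\sqrt{2}})\pi\sqrt{n-\tfrac{1}{8}}\big)$.

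Finally, apply the expansion
\[
I_2(z)=\frac{e^{z}}{\sqrt{2\pi z}}\left(1+O\!\left(\frac{1}{z}\right)\right)
\]
with $z=\pi\sqrt{n-\tfrac{1}{8}}$. Using $\sqrt{2\pi z}=\pi\sqrt{2}\,(n-\tfrac{1}{8})^{1/4}$ and multiplying by the explicit prefactor gives
\[
a(n)\sim \frac{\pi}{4\sqrt{2}(n-\tfrac{1}{8})}\cdot\frac{e^{\pi\sqrt{n-1/8}}}{\pi\sqrt{2}\,(n-\tfrac{1}{8})^{1/4}}=\frac{e^{\pi\sqrt{n-1/8}}}{8\,(n-\tfrac{1}{8})^{5/4}},
\]
as claimed. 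The main obstacle is the bookkeeping at $l=1$: one must verify that the complex prefactor $e^{-\pi i/2}A_1(n+1)$ collapses to the real positive scalar $1$, so that the resulting asymptotic is actually real. Everything else is the Bessel-function expansion together with the crude tail estimate.
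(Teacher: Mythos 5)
Your proposal is correct and follows essentially the same route as the paper, which simply asserts that the $l=1$ term dominates, records the intermediate asymptotic $a(n)\sim \frac{\pi}{4\sqrt{2}(n-1/8)}I_2\bigl(\pi\sqrt{n-1/8}\bigr)$, and applies the first-order expansion $I_2(z)\sim e^z/\sqrt{2\pi z}$; you supply the tail estimate and the prefactor bookkeeping that the paper leaves implicit. One minor slip: the bound $I_2(z)\ll e^z/\sqrt{z}$ blows up as $z\to 0^+$, so it does not by itself give convergence of the sum over $l\ge 2$; use instead a bound such as $I_2(z)\ll z^2e^z$, valid for all $z>0$, which yields both convergence (like $\sum l^{-2}$) and the claimed exponential saving over the $l=1$ term.
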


As a final consequence we prove a conjecture by Banerjee, Paule, Radu and Zeng.

\begin{corollary}
	We have
	$$\log a(n) \sim \pi \sqrt{n} - \frac{5}{4} \log n - \log 8 - \left(\frac{15}{8\pi} + \frac{\pi}{16}\right)\frac{1}{\sqrt{n}}, \quad n \rightarrow \infty.$$
\end{corollary}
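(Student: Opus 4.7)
The idea is to refine the preceding corollary by retaining one additional term in the asymptotic expansion of $I_2$. In the exact formula \eqref{ExactFormula}, the $l=1$ contribution of the odd-$l$ sum dominates, since every other term involves a Bessel function whose argument is at most $\pi\sqrt{n-1/8}/\sqrt{2}$ (attained at $l=2$ in the even-$l$ sum). Truncating to the $l=1$ term therefore introduces a multiplicative error of the form $1 + O(\exp(-c\sqrt{n}))$ for some fixed $c>0$, which is negligible against any polynomial correction to the leading exponential. After the phase factors and the $l=1$ Kloosterman sum are reconciled exactly as in the derivation of the preceding corollary, the problem reduces to analyzing
$$a(n) = \frac{\pi}{4\sqrt{2}\,(n-\tfrac{1}{8})}\, I_2\!\left(\pi\sqrt{n-\tfrac{1}{8}}\right)\bigl(1 + O(e^{-c\sqrt{n}})\bigr).$$

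The next step is to invoke the two-term modified Bessel asymptotic $I_2(z) = \frac{e^{z}}{\sqrt{2\pi z}}\bigl(1 - \tfrac{15}{8z} + O(z^{-2})\bigr)$ as $z \to \infty$, where the coefficient $\tfrac{15}{8}$ equals $a_1(2) = (4\cdot 4 - 1)/8$ in the standard expansion recalled in the paper. Substituting $z = \pi\sqrt{n-1/8}$ and collecting terms yields
$$a(n) = \frac{e^{\pi\sqrt{n-\frac{1}{8}}}}{8\,(n-\tfrac{1}{8})^{5/4}}\left(1 - \frac{15}{8\pi\sqrt{n-\tfrac{1}{8}}} + O\!\left(\frac{1}{n}\right)\right).$$

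Finally, take $\log$ of both sides and expand each piece in inverse powers of $\sqrt{n}$: the exponential contributes $\pi\sqrt{n-1/8} = \pi\sqrt{n} - \pi/(16\sqrt{n}) + O(n^{-3/2})$, the polynomial prefactor contributes $-\tfrac{5}{4}\log(n-1/8) = -\tfrac{5}{4}\log n + O(1/n)$, the constant prefactor contributes $-\log 8$, and the logarithm of the Bessel correction gives $-15/(8\pi\sqrt{n}) + O(1/n)$. Summing the two contributions at order $1/\sqrt{n}$ produces the claimed coefficient $-(\pi/16 + 15/(8\pi))$, proving the conjecture. The principal obstacle is really only careful bookkeeping: one must check that no term of order $1/\sqrt{n}$ is missed, and that neither the exponentially suppressed tail of the exact formula nor the $O(z^{-2})$ Bessel remainder contaminates the $1/\sqrt{n}$ coefficient. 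Both verifications are immediate from the error estimates above.
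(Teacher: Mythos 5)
Your proof is correct and follows essentially the same route as the paper: isolate the dominant $l=1$ term, apply the two-term asymptotic $I_2(z)=\frac{e^z}{\sqrt{2\pi z}}\left(1-\frac{15}{8z}+O(z^{-2})\right)$, shift from $n-\frac{1}{8}$ to $n$, and take logarithms to collect the two contributions at order $n^{-1/2}$. If anything your bookkeeping is slightly cleaner than the paper's, whose displayed product contains a typographical $1-\frac{15}{8n}$ where $1-\frac{15}{8\pi\sqrt{n}}$ is meant, together with a duplicated factor $\left(1-\frac{\pi}{16\sqrt{n}}\right)$; the substance is identical.
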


\begin{proof}
	We need to shift our approximation from $n-\frac{1}{8}$ to $n.$ Thus, write
	
	$$e^{\pi\sqrt{n-\frac{1}{8}}} = e^{\pi\sqrt{n}}\left(1-\frac{\pi}{16\sqrt{n}} + O\left(\frac{1}{n}\right)\right),$$
	
	$$\frac{1}{8\left(n-\frac{1}{8}\right)^{\frac{5}{4}}} = \frac{1}{8n^{\frac{5}{4}}}\left(1+O\left(\frac{1}{n}\right)\right).$$
	
	\noindent
	Together with the second order approximation for the Bessel function 
	
	$$I_2(z) = \frac{e^{z}}{\sqrt{2\pi z}}\left(1-\frac{15}{8z} + O\left(\frac{1}{z^2}\right)\right)$$
	
	\noindent
	we obtain
	
	$$a(n) = \frac{e^{\pi\sqrt{n}}}{8n^{\frac{5}{4}}}\left(1-\frac{15}{8n} + O\left(\frac{1}{n^2}\right)\right)\left(1-\frac{\pi}{16\sqrt{n}} + O\left(\frac{1}{n}\right)\right)\left(1-\frac{\pi}{16\sqrt{n}} + O\left(\frac{1}{n}\right)\right)\left(1 + O\left(\frac{1}{n}\right)\right)$$
	
	\noindent
	Taking logarithms, we find as conjectured
	
	\begin{equation*}
	\log a(n) \sim \pi \sqrt{n} - \frac{5}{4} \log n - \log 8 - \left(\frac{15}{8\pi} + \frac{\pi}{16}\right)\frac{1}{\sqrt{n}}, \quad n \rightarrow \infty.
	\end{equation*}
	
\end{proof}

\begin{remark}
	We can refine our proof and obtain stronger asymptotics of the form 
	
	\begin{equation*}
	\log a(n) \sim \pi \sqrt{n} - \frac{5}{4} \log n - \log 8 - \left(\frac{15}{8\pi} + \frac{\pi}{16}\right)\frac{1}{\sqrt{n}} + \sum_{j=2}^{\infty} \frac{c_j}{n^{\frac{j}{2}}}
	\end{equation*}
	
	\noindent
	for some real $c_i$ by taking higher order expansions of the involved functions.
\end{remark}


\begin{thebibliography}{99}

\bibitem{Andrews} G. Andrews, \emph{The Theory of Partitions}, Encycl. Math. and Its Appl., vol.
2, G.-C. Rota ed., Addison-Wesley, Reading, 1976 (Reissued: Cambridge University Press, Cambridge 1998).	
	
\bibitem{AndrewsBerndt} G. Andrews and B. Berndt, \emph{Ramanujan’s Lost Notebook}, Part I, SpringerVerlag, New York, 2005.

\bibitem{Apostol} T. Apostol, \emph{Modular Functions and Dirichlet Series in Number Theory}, Springer-Verlag, 1976

\bibitem{Berndt} B. Berndt, \emph{Ramanujan’s Notebooks, Part III}, Springer-Verlag, New York,
1991.	
	
\bibitem{BanerjeePauleRaduZeng} K. Banerjee, P. Paule, C. Radu and W. Zeng, \emph{New inequalities for $p(n)$ and $\log p(n)$}, Ramanujan J. (2022).	

\bibitem{Chan} H. Chan, \emph{Ramanujan's cubic continued fraction and an analog of his "most beautiful identity"}, International Journal of Number Theory Vol. 06, No. 03, pp. 673-680 (2010).

\bibitem{ChenLin} W. Chen and B. Lin, \emph{Congruences for the Number of Cubic Partitions Derived from Modular Forms}, arxiv:0910.1263

\bibitem{HardyRamanujan} G. Hardy and S. Ramanujan, \emph{Asymptotic formulae in combinatory analysis},
Proc. London Math. Soc. Ser. 2 \textbf{17} (1918), 75--115.

\bibitem{RademacherExact} H. Rademacher, \emph{A convergent series for the partition function $p(n)$}, PNAS February 1, 1937 {\bf 23} (2), 78--84.

\bibitem{RamanujanCollectedPapers} S. Ramanujan, \emph{Collected Papers}, Cambridge University Press, Cambridge,
1927; reprinted by Chelsea, New York, 1962; reprinted by American Mathematical Society, Providence, 2000.

\bibitem{Temme} N. Temme, \emph{Asymptotic Methods for Integrals}, World Scientific (2014). 

\bibitem{Zuckerman} H. Zuckerman, {\it On the coefficients of certain modular forms belonging to subgroups
	of the modular group}, Trans. Amer. Math. Soc. \textbf{45} (1939), 298a321.

\end{thebibliography}
\end{document}